\newtheorem{lemma}{Lemma}
\newtheorem{teo}[lemma]{Theorem}
\newtheorem{cor}[lemma]{Corollary}
\theoremstyle{definition}
\theoremstyle{remark}
\newcommand{\matCP} {\ensuremath {\mathbb{CP}}}
\author{Bruno Martelli}
\address{Dipartimento di Matematica ``Tonelli'', Largo Pontecorvo 5, 56127 Pisa, Italy}
\email{martelli at dm dot unipi dot it}
\title{A finite set of local moves for Kirby calculus}
\begin{document}

\begin{abstract}
We exhibit a finite set of local moves that connect any two surgery presentations of the same 3-manifold via framed links in $S^3$. The moves are handle-slides and blow-downs/ups of a particular simple kind.
\end{abstract}

\maketitle

A \emph{framed link} $L\subset S^3$ in the three-sphere is a link equipped with a section (the \emph{framing}) of the unitary normal bundle. The framing is considered only up to isotopy and is notoriously determined by assigning an integer to each component of $L$, which counts the algebraic intersection of the framing with the standard longitude.

Let $L\subset S^3$ be a framed link in the three-sphere. A \emph{surgery} along $L$ is a standard cut-and-paste operation in three-dimensional topology that consists of removing from $S^3$ a solid torus neighborhood of each component of $L$ and gluing it back via a different map, hence producing a new closed 3-manifold $N$. The map sends a meridian of the solid torus to a curve parallel to the framing of $L$, and this requirement is enough to determine the new 3-manifold $N$ up to diffeomorphism. 

This cut-and-paste operation has a natural four-dimensional interpretation: we may see surgery as the result of attaching a four-dimensional 2-handle to $S^3$, thus constructing a four-dimensional cobordism from $S^3$ to $N$.

A celebrated theorem of Lickorish-Wallace \cite{Li, Wa} states that every closed orientable 3-manifold $N$ can be obtained by surgerying along some framed link $L$ in the three-sphere. It is easy to construct many different framed links giving rise to the same 3-manifold $N$: there are in fact various types of moves that transform $L$ into a new framed link $L'$ withouth affecting $N$. The most important ones are handles-slides and blow-downs.

A \emph{handle-slide} consists of sliding one 2-handle onto another; this move does not affect the four-dimensional cobordism and hence $N$ is also preserved. The link is modified as follows: pick two components of $L$, select a handlebody containing them as in Fig.~\ref{HS:fig}, and then modify one component as shown in the picture. 

\begin{figure}
 \begin{center}
  \includegraphics[width = 8cm]{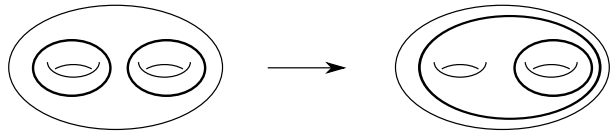}
 \end{center}
 \caption{A handle-slide. The framing of all components is horizontal. Note that the handlebody may be kotted in $S^3$.}
 \label{HS:fig}
\end{figure}

If a component $K$ of $L$ is unknotted and has framing $\pm 1$, it determines a $\pm\matCP^2$-summand in the four-dimensional cobordism: the removal of this factor is a \emph{topological blow-down} which also does not affect $N$. The corresponding move is shown in Fig.~\ref{FR:fig}. The inverse of a blow-down is of course called a \emph{blow-up}.

\begin{figure}
 \begin{center}
  \includegraphics[width = 7cm]{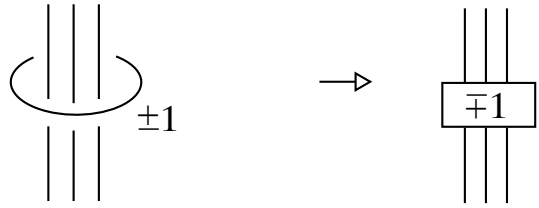}
 \end{center}
 \caption{A topological blow-down. The number $n\geqslant 0$ of vertical strands crossing the unknot is arbitrary (here $n=3$). The box marked with $+1$ ($-1$) indicates one full counterclockwise (clockwise) twist.}
 \label{FR:fig}
\end{figure}

These moves generate all possible moves. More precisely, let $L$ and $L'$ be two framed link representing the same 3-manifold $N$. A theorem of Kirby \cite{Ki} says that $L$ and $L'$ are related by handle-slides and blow-downs/ups with $n=0$ vertical strands. Subsequently, Fenn and Rourke \cite{FR} have shown that blow-downs/ups with arbitrary number $n$ of strands are also enough to connect $L$ and $L'$. 

Recall that a \emph{framed tangle} in a 3-disc $D$ is a collection of disjoint framed properly embedded knots and arcs in $D$. We say that a move is \emph{local} if it consists of substituting a framed tangle in some 3-disc $D$ with another framed tangle in $D$, sharing the same framed endpoints. For instance, all the moves shown in Fig.~\ref{FR:fig} and Fig.~\ref{moves_paper:fig} are local.

We may now say that Kirby's theorem furnishes a finite set of non-local moves, whereas Fenn-Rourke's theorem gives an infinite list of local moves. We exhibit here a finite list of local moves.

\begin{figure}
 \begin{center}
  \includegraphics[width = 12.5 cm]{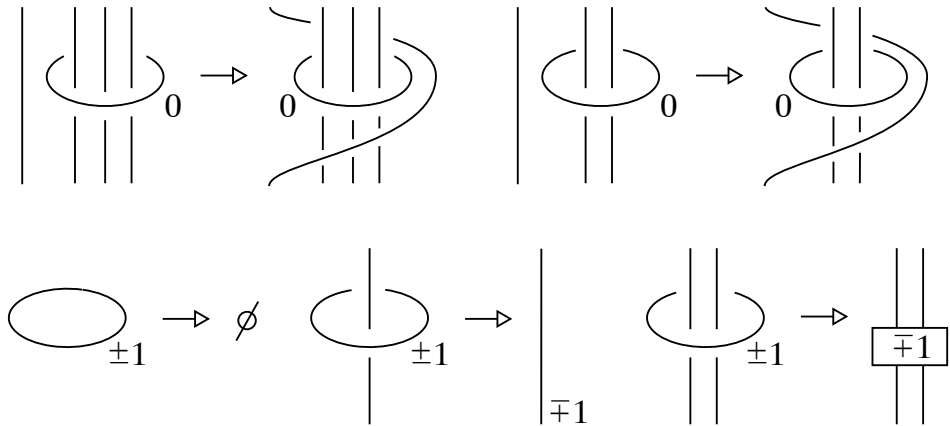}
 \end{center}
 \caption{A finite set of local moves.}
 \label{moves_paper:fig}
\end{figure}

\begin{teo}
Let $L$ and $L'$ be two framed links in $S^3$ representing the same 3-manifold $N$ via surgery. The links are connected by a sequence of moves of the type shown in Fig.~\ref{moves_paper:fig} and their inverses.
\end{teo}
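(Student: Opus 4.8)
The plan is to derive the theorem from the Fenn--Rourke theorem \cite{FR}. That result already tells us that $L$ and $L'$ are related by a finite sequence of ambient isotopies and of blow-downs/ups of the form in Fig.~\ref{FR:fig}, but with an \emph{arbitrary} number $n$ of vertical strands. Since an ambient isotopy of the link in $S^3$ may be inserted freely between two consecutive moves, it is enough to show that each individual Fenn--Rourke move is a composition of moves of Fig.~\ref{moves_paper:fig} and their inverses (possibly interspersed with isotopies). Concretely, write $F^{\varepsilon}_{n}$, with $\varepsilon=\pm1$, for the move that deletes an $\varepsilon$-framed unknot $C$ pierced exactly once by each of $n$ strands and inserts a full $(-\varepsilon)$-twist on those strands; the goal is to realize $F^{\varepsilon}_{n}$ for every $n\geq 0$. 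The moves of Fig.~\ref{moves_paper:fig} include the blow-down/up of a $\pm1$-framed unknot linking at most a bounded number of strands, so $F^{\varepsilon}_{n}$ is already on the list for small $n$; these instances are the base of an induction on $n$.

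For the inductive step I would pass from $F^{\varepsilon}_{n-1}$ to $F^{\varepsilon}_{n}$ as follows. Choose two of the $n$ strands piercing the spanning disc of $C$ and, by an ambient isotopy, arrange that inside a small ball $B$ sitting just below that disc these two strands run parallel while all other components of $L$ avoid $B$. Inside $B$ introduce an auxiliary $(-\varepsilon)$-framed unknot $D$ linking exactly these two strands; this is one of the blow-ups allowed by Fig.~\ref{moves_paper:fig} (if necessary after one further move of the list). Using only the special handle-slides of Fig.~\ref{moves_paper:fig}, each one supported inside $B$, slide $C$ over $D$ --- and, if needed, a strand over $D$ --- so as to decrease the number of strands piercing $C$, the discarded strands being taken over by $D$ together with a local twist region created inside $B$. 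One may then invoke the inductive hypothesis, replacing the subdiagram consisting of $C$ and its remaining strands by the composite of moves of Fig.~\ref{moves_paper:fig} that realizes $F^{\varepsilon}_{n-1}$ (or $F^{\varepsilon}_{n-2}$). Finally one cancels $D$ --- again a move of Fig.~\ref{moves_paper:fig} --- and simplifies inside $B$; up to isotopy the resulting diagram is the one produced by $F^{\varepsilon}_{n}$.

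The step I expect to be the main obstacle is precisely this inductive step, for two reasons. First, a general handle-slide is performed along a band that may be knotted and linked with the rest of the link --- this is exactly the feature that makes Kirby's original moves \cite{Ki} non-local --- so one has to check that every slide needed to move strands between $C$ and $D$ can be realized by the unknotted, ``simple'' slides drawn in Fig.~\ref{moves_paper:fig}; this is the reason the slides must be confined to the ball $B$, after the rest of $L$ has been isotoped out of the way. Second, one must follow the framings and the induced full twists through the whole procedure and verify that the signs are exactly right, so that the auxiliary circle $D$ and the twist region left inside $B$ recombine into the single $(-\varepsilon)$-twist demanded by $F^{\varepsilon}_{n}$; this is the routine but delicate bookkeeping that I would carry out directly on the tangle diagrams. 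Granting the inductive step, the theorem follows by combining Fenn--Rourke's theorem, the freedom to isotope between moves, and the induction on $n$.
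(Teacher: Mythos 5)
Your first step---invoking Fenn--Rourke and reducing the theorem to the claim that each blow-down $F^{\varepsilon}_{n}$ with $n$ strands is a composite of the moves of Fig.~\ref{moves_paper:fig}---is exactly the reduction the paper makes, and it is unproblematic. The difficulty is that your inductive step, which you yourself flag as the main obstacle, is not yet an argument, and as sketched it does not go through. Concretely: after you blow up the $(-\varepsilon)$-framed circle $D$ around two of the strands and slide $C$ over $D$, the framings add, $\varepsilon+(-\varepsilon)+2\,\mathrm{lk}(C,D)=0$, so the curve you obtain is $0$-framed; it is therefore not the distinguished $\pm1$-framed unknot of any configuration $F^{\varepsilon}_{n-1}$ or $F^{\varepsilon}_{n-2}$, and the inductive hypothesis cannot be applied to it as claimed. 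Moreover $D$ still encircles the two chosen strands (and, depending on the band, may now link the new curve as well), so ``cancelling $D$'' at the end is not a single blow-down of Fig.~\ref{moves_paper:fig}; and nothing in the sketch shows that the slides you need can be realized by the particular bounded local slides of Fig.~\ref{moves_paper:fig} rather than by arbitrary handle-slides, which is precisely the point at issue. The sign and twist bookkeeping you defer is not a routine verification to be added later---it is the entire content of the theorem.

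What is missing is a device that keeps every intermediate picture of bounded width, and the paper supplies one that is quite different from an induction on $n$: the moves of Fig.~\ref{moves_paper:fig} are first used to create a $0$-framed Hopf link out of nothing (Fig.~\ref{blow:fig}) and, by repeating the same sequence, to append two more $0$-framed unknots at a time (Fig.~\ref{chain1:fig}); after $n-1$ repetitions one has a chain long enough that the $n$ vertical strands can be slid so that each passes through its own circle of the chain (Fig.~\ref{chain2:fig}). From then on every move takes place in a local picture containing a single strand and a couple of chain circles: one blow-down, two handle-slides and another blow-down carry the rightmost strand across (Fig.~\ref{chain3:fig}), and iterating strand by strand (Fig.~\ref{chain4:fig}) leaves exactly the full $\mp1$ twist of Fig.~\ref{FR:fig}. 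If you want to salvage your induction you would have to exhibit, explicitly on tangle diagrams, a bounded-width sequence of Fig.~\ref{moves_paper:fig} moves that replaces the $n$-strand configuration by smaller ones together with the correct compensating twists; in effect this amounts to reconstructing something like the chain construction of Figs.~\ref{blow:fig}--\ref{chain4:fig}, so at present the proposal records the right strategy but omits the proof's essential idea.
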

\begin{proof}
We show that our moves generate a topological blow-down with any number $n$ of strands, see Fig.~\ref{FR:fig}. We consider the case $n=3$ with sign $-1$: all other cases are solved analogously. 

\begin{figure}
 \begin{center}
  \includegraphics[width = 12.5 cm]{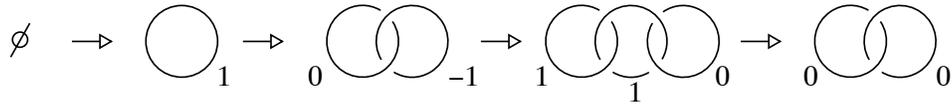}
 \end{center}
 \caption{How to create a 0-framed Hopf link. The 0-framed Hopf link represents the 4-manifold $S^2\times S^2$ and this is the usual sequence that produces $S^2\times S^2$ from $S^3$ via three topological blow-ups and one blow-down. }
 \label{blow:fig}
\end{figure}

We first note that our moves allow to create a 0-framed Hopf link, see Fig.~\ref{blow:fig}. In Fig.~\ref{chain1:fig} we use the same sequence of moves to add a tail of two 0-framed unknots. We perform this operation $n-1$ times in order to get a tail as in Fig.~\ref{chain2:fig}-left. Then we slide the vertical strands as in Fig.~\ref{chain2:fig}.

\begin{figure}
 \begin{center}
  \includegraphics[width = 12 cm]{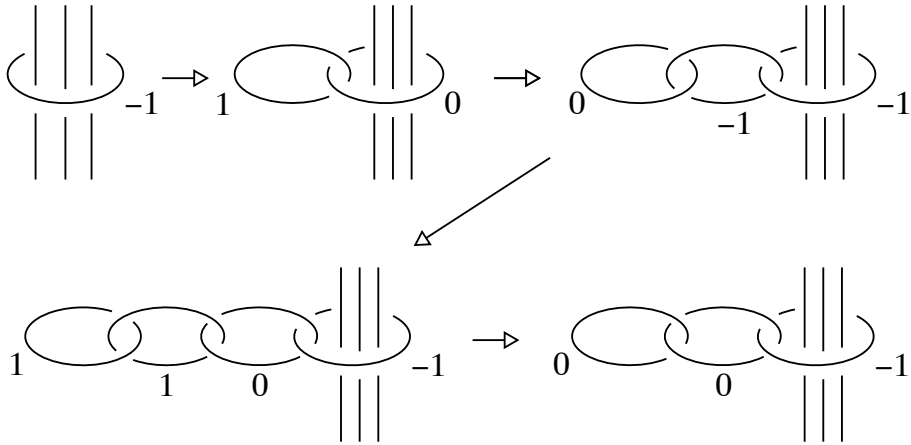}
 \end{center}
 \caption{We add a tail of 0-framed unknots, following Fig.~\ref{blow:fig}.}
 \label{chain1:fig}
\end{figure}

\begin{figure}
 \begin{center}
  \includegraphics[width = 12.5 cm]{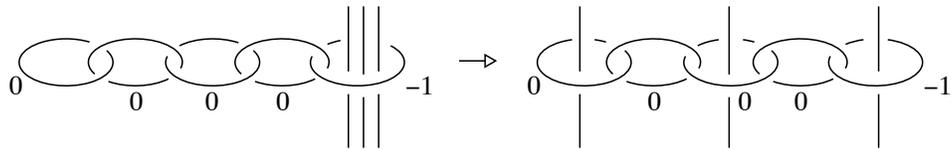}
 \end{center}
 \caption{We slide the vertical strands in order to isolate each one from the others.}
 \label{chain2:fig}
\end{figure}

\begin{figure}
 \begin{center}
  \includegraphics[width = 12.5 cm]{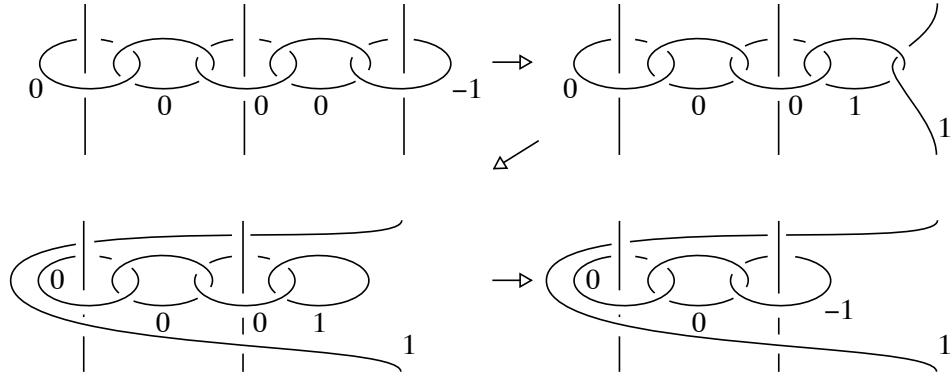}
 \end{center}
 \caption{We slide the rightmost strand. We make a blow-down, two handle-slides, and one blow-down again. }
 \label{chain3:fig}
\end{figure}

\begin{figure}
 \begin{center}
  \includegraphics[width = 12 cm]{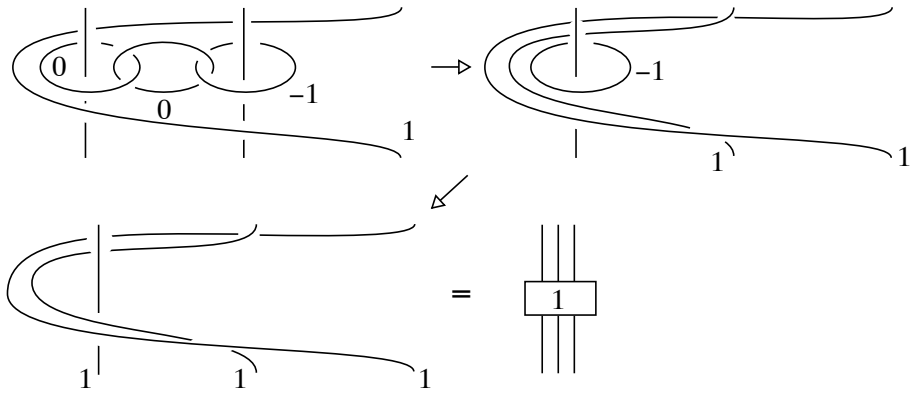}
 \end{center}
 \caption{We conclude the proof by iteration.}
 \label{chain4:fig}
\end{figure}

Having isolated the vertical strands, we slide the rightmost strand to the left as in Fig.~\ref{chain3:fig}. We repeat this operation for each strand as in Fig.~\ref{chain4:fig}. The result is as in Fig.~\ref{FR:fig}-right, as required.
\end{proof}

Of course one may choose a different finite set of generating moves. Blow-ups/downs with at most 5 strands for instance suffice.

\begin{cor}
Let $L$ and $L'$ be two links representing the same 3-manifold $N$ via surgery. The 
links are connected by a sequence of blow-ups/downs with $n\leqslant 5$ strands.
\end{cor}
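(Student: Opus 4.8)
The plan is to derive this from the Theorem. By the Theorem the moves of Fig.~\ref{moves_paper:fig}, together with their inverses, generate every transformation between surgery presentations of the same $N$; moreover the family of blow-ups/downs with $n\leqslant 5$ strands is visibly closed under taking inverses (a blow-up is the inverse of a blow-down). Hence it suffices to show that each of the finitely many moves of Fig.~\ref{moves_paper:fig} can itself be performed by a sequence of blow-ups and blow-downs with at most $5$ strands. For the moves of Fig.~\ref{moves_paper:fig} that are already blow-ups or blow-downs of $\pm1$-framed unknots nothing is needed: one reads the number $n$ of strands off the picture and checks $n\leqslant 5$. The whole content of the corollary is therefore concentrated in the handle-slide move (and any move of Fig.~\ref{moves_paper:fig} that is not itself a blow-up/down).

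To express a handle-slide through blow-ups and blow-downs I would use the classical device already underlying Fenn--Rourke's theorem \cite{FR}: sliding a component $K$ over a component $K'$, as in Fig.~\ref{HS:fig}, is realized by first blowing up a $\mp1$-framed unknot $C$ encircling the band of the slide --- that is, encircling the two parallel strands running between $K$ and the chosen pushoff of $K'$, together with whatever link strands already cross that local disc --- then absorbing the full twist that blowing $C$ down produces, and finally blowing $C$ down. On the specific handle-slide tangle drawn in Fig.~\ref{moves_paper:fig} this is a short explicit sequence, parallel in spirit to the manipulations in Fig.~\ref{chain3:fig}, and the auxiliary circle $C$ acquires exactly two strands more than the handle-slide tangle it surrounds. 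Since that tangle carries at most three strands, the blow-up and the blow-down so introduced carry at most five strands, which gives the claimed bound; one then checks by inspection that the remaining moves of Fig.~\ref{moves_paper:fig} present no larger configurations.

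The main obstacle is precisely this bookkeeping. One must verify that a $\mp1$-framed blow-down of $C$ does the intended thing and only that: it shifts by $\mp1$ the framing of each strand it encircles and inserts a compensating full twist, so the placement of $C$ and the framings of the strands have to be arranged so that, after blowing $C$ down, the diagram is the post-slide diagram up to isotopy --- with no residual framing change and no stray clasp left behind. A secondary point is to confirm, case by case over the finitely many moves of Fig.~\ref{moves_paper:fig}, that one never has to route more than $5$ strands through $C$; if a naive arrangement threatened to exceed this, one would first lower the local strand count by an isotopy or by a harmless split $\pm1$-framed blow-up ($n=0$) before performing the twist. Once this is done for every move, the corollary follows, and indeed the bound $5$ is not claimed to be sharp --- only convenient.
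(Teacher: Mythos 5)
Your overall strategy is the paper's: invoke the Theorem, observe that the blow-down moves of Fig.~\ref{moves_paper:fig} are already blow-ups/downs with few strands, and then realize the handle-slide moves of Fig.~\ref{moves_paper:fig} by blow-ups/downs whose circles meet at most $5$ strands (the paper does exactly this, by the explicit sequence of Fig.~\ref{FR2:fig}: one blow-up, one blow-down, a strand manipulation, then a second blow-up and blow-down). So the reduction is right; the problem is the concrete realization you propose for the handle-slide.

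Your recipe --- blow up a single $\mp1$-framed unknot $C$ encircling the band (plus the strands crossing the local disc), absorb the twist by isotopy, then blow the same $C$ back down --- cannot produce a handle-slide. Between the blow-up and the blow-down you only allow isotopy, so the linking numbers of $C$ with every component of the link are unchanged; consequently the blow-down changes the linking matrix of the remaining components by exactly the negative of what the blow-up did (each is $\pm\,vv^{T}$ with the same vector $v$ of linking numbers of $C$), and the net transformation preserves the linking matrix of $L$. A handle-slide of $K$ over $K'$, applied in a general context, changes the framing of $K$ by $f(K')+2\,\mathrm{lk}(K,K')$ and changes the linkings of $K$ with the other components, so in most contexts in which the local move of Fig.~\ref{moves_paper:fig} is invoked your sequence cannot equal it. (Blowing down a circle other than $C$ is not an option either, since no original component may disappear.) This is why the paper needs two blow-up/blow-down pairs in Fig.~\ref{FR2:fig}; producing such an explicit sequence, and checking that each of its circles meets at most $5$ strands, is precisely the content of the corollary and is the step missing from your argument. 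The fallback you offer --- lowering the strand count by a split $\pm1$-framed blow-up with $n=0$ --- also does nothing, since a split unknot cannot reduce how many strands must pass through the twisting circle.
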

\begin{proof}
We prove that the handleslides in Fig.~\ref{moves_paper:fig} can be generated by blow-downs/ups with $n\leqslant 5$ strands. This is done in Fig.~\ref{FR2:fig}.
\begin{figure}
 \begin{center}
  \includegraphics[width = 12 cm]{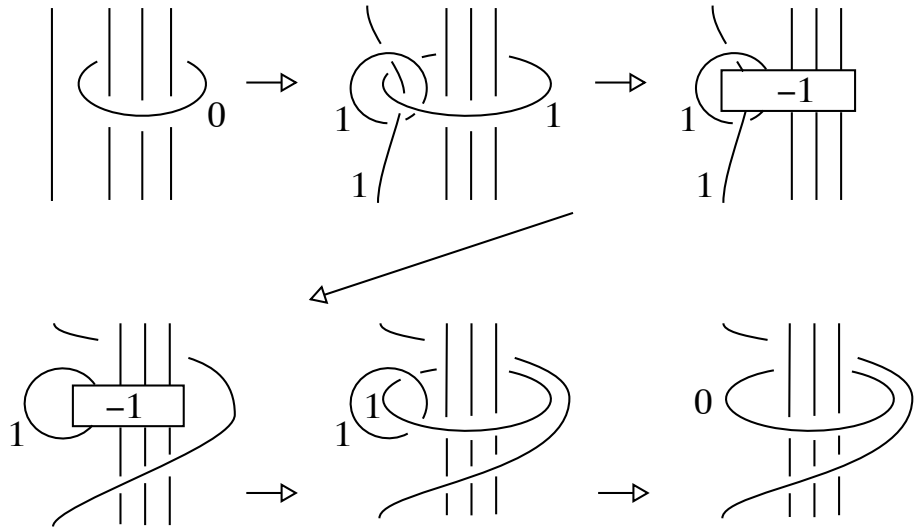}
 \end{center}
 \caption{We make one blow-up, one blow-down, we slide one strand, and make one more blow-up and blow.down.}
 \label{FR2:fig}
\end{figure}
\end{proof}

\thanks{This paper was motivated by a nice question on Mathoverflow \cite{algori}.}

\end{document}